\newtheorem{theorem}{Theorem}[section]
\newtheorem{lemma}[theorem]{Lemma}
\theoremstyle{definition}
\newtheorem{definition}[theorem]{Definition}
\theoremstyle{remark}
\newcommand{\R}{\mathbb{R}}
\newcommand{\dt}{\partial_t}
\newcommand{\dx}{\partial_x}
\newcommand{\eps}{ \varepsilon}
\numberwithin{equation}{section}
\begin{document}

\title[The semi-global isometric embedding]{The semi-global isometric embedding  of surfaces with curvature changing signs stably}


\author[W. Cao]{Wentao Cao}
\address{Institute f\"{u}r mathematik, Universit\"{a}t Leipzig, D-04109, Leipzig, Germany}
\email{wentao.cao@math.uni-leipzig.de}
\thanks{The research  is supported by the ERC Grant Agreement No. 724298. The paper was done when the author was a post-doctor of Max-Plank Institute of Mathematics in the Sciences, so the author warmly thanks the institute for its hospitality and research environment it provided.  The author also thanks the anonymous referee for his/her careful reading and helpful comments, which contributed to an improvement of the initial manuscript.}


\subjclass[2010]{Primary 35M12, 53A05, 53C21.}

\date{}



\begin{abstract}
A semi-global isometric embedding of abstract surfaces with Gaussian curvature changing signs of any finite order is obtained through solving the Darboux equation.
\end{abstract}

\maketitle

%
\section{Introduction}

Isometric embedding is an interesting and historical problem in  differential geometry. Nash obtained two famous results on such problem. In \cite{nash1954}, he showed that any smooth $n$ dimensional Riemannian manifold $(\mathcal{M}^n, g)$ can be $C^1$  isometric embedded into Euclidean space $\R^{n+2}$ by a useful technique called convex integration later, which is also applied to solve non-uniqueness problem in PDE. Besides, in \cite {nash1956} he also proved that  sufficiently smooth metric can be isometric embedded into  Euclidean space $\R^N$ with $N$ depending on $n$ and larger than $s_n=\frac{n(n+1)}{2}$ by applying his implicit theorem, which is a powerful tool handing the loss of regularity.

However, it is believed that the best dimension of the target space for the local smooth isometric embedding of $\mathcal{M}^n$ is $N=s_n.$ Janet in \cite{Janet} and  Cartan in \cite{Cartan} independently proved that any analytic $n$ dimensional Riemannian manifold admits a local analytic isometric embedding in $\R^{s_n}.$ In particular, when $n=2,$ Schlaefli conjectured and later Yau in \cite{Yau} reposed that any smooth surface always admit a local smooth isometric embedding in $\R^3.$ In this case, the problem can be formulated to solve the Gauss-Codazzi system or the Darboux equation (see \cite{HH}). The type of the equations depends on the signs of the Gaussian curvature $K$ of the given surface. When $K>0,$ the equations are of elliptic type. Many mathematicians take the Darboux equation to handle the  problem. It also has a close relation with the Weyl problem when the manifold is $\mathbb{S}^2$ (see \cite{w}), one can find its proof in \cite{n}. Readers can also see \cite{gl, hz,lin0} for $K\geq0$ case. On the other hand, when $K<0,$ or $K\leq0,$ the equations are hyperbolic or degenerate hyperbolic equations. In stead of the Darboux equation, the Gauss-Codazzi system is usually used to tackle the isometric embedding problem for such case, one can see \cite{CSW,Christoforou,H}.

Some difficulties arise when $K$ changes signs, because the equations are of mixed type. Lin made a breakthrough in \cite{Lin} by applying the theory of symmetric positive system and obtained that a sufficiently smooth isometric embedding exists when the Gaussian curvature satisfies
$$K(0)=0, \nabla K(0)\neq0.$$
Then Han improved the regularity of the embedding in \cite{Han} and later he also showed the existence of local isometric embedding when the Gaussian curvature changes signs stably, i.e. $K$ vanishes at finite order across a curve in \cite{han}. For the semi-global isometric embedding, Dong first showed that there exists an isometric embedding parametrized on $[0, 2\pi]\times[-\delta,\delta]$ mapping into $\R^3$ in \cite{Dong} provided that the positive constant $\delta$ is small enough and $K$ satisfies
$$K(x_1, 0)=0, \partial_2K\Gamma^2_{11}<0, \text{ on } [0, 2\pi]\times\{0\}$$
together with the other two  compatibility conditions. Here $(x_1,x_2)$ is the parameter for the surface and $\Gamma^2_{11}$ is one Christoffel symbol. We call an embedding is semi-global if it is periodic in one variable and locally defined in the other variable. He proved this result through the Darboux equations and similar method to \cite{hong1987}.  In \cite{li}, Li got a semi-global isometric embedding under same conditions as Dong but only defined on $[0, 2\pi]\times[0,\delta].$

In the present paper, we gain a semi-global isometric embedding for the case where the Gaussian curvature changes signs stably, i.e. vanishes at any finite order $2\alpha-1$ across a closed curve, by solving the Darboux equation, see Theorem \ref{t:anyorder}. Our main approach is the framework of symmetric positive system, which is fully studied in \cite{Fried} and improved by Gu in \cite{Gu}. Surprisingly,  our priori estimate for the linearised equation does not depend on the vanishing order $\alpha, $ while the regularity of the finial  embedding depends on $\alpha.$ We remark that our arguments can not be extended to  gain the semi-global smooth isometric embedding of the smooth metric, since the $H^\infty$ estimate seems hard to be obtained by the framework here.  Our future plan is to consider the smooth case.

The rest of this paper consists of two sections. In Section \ref{positive}, we provide general theory of symmetric positive system. Main Theorem \ref{t:anyorder} and its proof are given in Section \ref{stable}.

\bigskip

\section{ Symmetric positive system}\label{positive}

The theory of symmetric positive system is given in \cite{Fried} and later improved in \cite{Gu}, which is an effective approach to show the existence of differential equations of mixed type.  We will apply the theory  to obtain the priori estimate of the linearized equation of the Darboux equation  in Section \ref{stable}. In this section, we will give the framework of symmetric positive system, whose details can  be found in  \cite{Fried, Gu, HH}.

Consider the first order linear differential equations
\begin{equation}\label{e:general}
\mathcal{L}U=A\frac{\partial U}{\partial t}+B\frac{\partial U}{\partial x}+CU=F
\end{equation}
defined on a rectangle $\Omega$ centring at 0, where $U:\Omega\mapsto\R^n$ is the unknown vector function and $A, B, C$ are given $n\times n$ real function matrixes, $F:\Omega\mapsto\R^n$ is any given vector function. Besides, the boundary condition for \eqref{e:general} is given as follows
\begin{equation}\label{e:genrealboundary}
\Xi U=0, \quad \text{ on } \partial \Omega,
\end{equation}
where $\Xi$ is  also a given  $n\times n$ real function matrix. If $A$ and $B$ are symmetric, \eqref{e:general} is called symmetric, and \eqref{e:general} is called symmetric positive system if  the following matrix is positive definite,
\begin{equation*}
\Theta=C+C^T-\dt A-\dx B.
\end{equation*}
Here $C^T$ stands for the transpose matrix of $C.$ With the assumption that \eqref{e:general} is symmetric positive, it is easy to get the bound of some norm of $U$ through integrating by parts after multiplying $U$ to \eqref{e:general}. To get a differentiable solution of \eqref{e:general} we must estimate the derivatives of $U.$ An easy calculation gives us
\begin{equation*}
\begin{split}
&A\dt(\dt U)+B\dx(\dt U)+C\dt U+\dt A\dt U+\dt B\dx U+\dt CU=\dt F,\\
&A\dt(\dx U)+B\dx(\dx U)+C\dx U+\dx A\dt U+\dx B\dx U+\dx CU=\dx F.
\end{split}
\end{equation*}
We note that the above equations and \eqref{e:general} still form a symmetric system of equations for the $3n$ unknown functions $\{U, \dt U, \dx U\}.$ However, the symmetric system needs not to be positive and the associated boundary conditions are not necessarily homogeneous. With an observation that the tangential derivatives satisfy the homogeneous boundary conditions, Friedrich in \cite{Fried} introduced a set of differential operators of the first order $D_\sigma$ like
$$D_0=I, D_1=\frac{\partial}{\partial x}, D_\sigma=d^1_\sigma\frac{\partial}{\partial t}+d^2_\sigma, \sigma=2, 3,  $$
where $d^1_\sigma$ are smooth $n\times n$ diagonal matrixes and  $d^2_\sigma$ are any smooth $n\times n$ matrixes. Furthermore, we assume that the  set of differential operator is complete, i.e. any tangential operator $D$ can be expressed as $D=C^\sigma D_\sigma$ with $n\times n$ matrix $C^0$ and scalar functions $C^\sigma(\sigma\neq0).$  On the other hand, let $n_1, n_2$ be the outward normal vectors and $\Upsilon=An_1+Bn_2,$ then the boundary condition \eqref{e:genrealboundary} is said to be admissible if for any point on the boundary $\partial\Omega$, the plane $\Xi U=0$ is the maximal non-negative plane of the quadratic form $U\cdot\Upsilon U$ .
\par
Moreover, the following  formula
\begin{equation}\label{e:diff}
D_\sigma\mathcal{L}=\mathcal{L}D_\sigma+p_\sigma^\tau D_\tau+\theta_\sigma\mathcal{L}
\end{equation}
holds for the set of differential operators. Here $p_\sigma^\tau, \theta_\sigma$ are all $n\times n$ matrices. Let $\underset{1}{U}$ be the set of $4n$ unknown functions $\{U, D_1U, D_2U, D_3U\}$ and define $\underset{1}{\mathcal{L}}$ as the differential operator on $\underset{1}{U}$ like the following
\begin{equation*}
\underset{1}{\mathcal{L}}\underset{1}{U}=\{\mathcal{L}U+p_0^\tau U_\tau, \mathcal{L}U_1+p_1^\tau U_\tau,\cdots,\mathcal{L}U_3+p_3^\tau U_\tau\}
\end{equation*}
with $U_\sigma=D_\sigma U.$ Set
\begin{equation*}
\theta\underset{1}{U}=\{\theta_0U, \theta_1U_1, \theta_2U_2, \theta_3U_3\}.
\end{equation*}
Then we can derive the $1$-st enlarged system
\begin{equation}\label{e:1enlarge}
\underset{1}{\mathcal{L}}\underset{1}{U}=(D-\theta)F
\end{equation}
with notations
\begin{equation*}
\begin{split}
&DF=\{D_0F, D_1F, D_2F, D_3 F\},\\
&\theta F=\{\theta_0F, \theta_1F, \theta_2F, \theta_3 F\}.
\end{split}
\end{equation*}
Indeed for any operator $D_\sigma,$ we have
\begin{equation*}
D_\sigma\mathcal{L}U=\mathcal{L}D_\sigma U+p_\sigma^\tau D_\tau U+\theta_\sigma\mathcal{L}U=\underset{1}{\mathcal{L}}D_\sigma U+\theta_\sigma F=D_\sigma F.
\end{equation*}
On the other hand, if the boundary operator is assumed to satisfy the following formula
\begin{equation}\label{e:diffboundary}
D_\sigma MU=MD_\sigma U+q_\sigma^0D_0U,
\end{equation}
and we furthermore set
$$\underset{1}{\Xi}\underset{1}{U}=\{\Xi U+q_0^0U, \Xi U_1+q_1^0U,\cdots, \Xi U_3+q_3^0U\},$$
then the enlarged boundary condition is
\begin{equation}\label{e:1enlargeboundary}
\underset{1}{\Xi}\underset{1}{U}=0.
\end{equation}
Hence, if $F\in C^1(\bar{\Omega}),$ $U\in C^2(\bar{\Omega}),$ then \eqref{e:1enlarge} and \eqref{e:1enlargeboundary} are satisfied. If the linear differential operator $\mathcal{L}$ and the boundary operator $M$ satisfy \eqref{e:diff} and \eqref{e:diffboundary} respectively, we call that \eqref{e:general}-\eqref{e:genrealboundary} can be enlarged. Friedrich in \cite{Fried} tells us that the set of differential operators can be constructed for any symmetric positive system \eqref{e:general}. Thus, we can also derive any $s$-enlarged system. From \cite{Fried} and \cite{Gu}, a powerful lemma about the existence of differentiable solutions to the boundary value problem \eqref{e:general}-\eqref{e:genrealboundary} can be concluded  as follows.
\begin{lemma}\label{l:exist}
Assume that the system \eqref{e:general} is positive and any $s$-enlarged system of \eqref{e:general} is also positive. Besides, the boundary condition \eqref{e:genrealboundary} is noncharacteristic and admissible. $A, B, C\in C^{s+1},$ $F\in H_s.$ Then there exists a strong solution $U\in H_s.$
\end{lemma}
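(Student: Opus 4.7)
The plan is to follow the classical Friedrichs/Gu strategy: first derive an $L^2$ a priori estimate, then use it together with the adjoint estimate to produce a weak $L^2$ solution by duality, then upgrade weak to strong via mollification, and finally iterate the argument on the $s$-enlarged system to gain regularity.

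First, I would establish the basic $L^2$ estimate for the unenlarged problem. Multiplying \eqref{e:general} by $2U$ and integrating by parts over $\Omega$ produces
\begin{equation*}
\int_\Omega U\cdot \Theta U\,dtdx+\int_{\partial\Omega} U\cdot\Upsilon U\,dS=2\int_\Omega U\cdot F\,dtdx.
\end{equation*}
Admissibility of \eqref{e:genrealboundary} says that $\{\Xi U=0\}$ is a maximal non-negative subspace of the quadratic form $U\cdot\Upsilon U$, so the boundary integral is non-negative; combined with the positive definiteness of $\Theta$ and Cauchy--Schwarz this yields $\|U\|_{L^2}\le C\|F\|_{L^2}$ for any $U\in C^1(\bar\Omega)$ satisfying $\Xi U=0$. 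The same computation applied to the formal adjoint $\mathcal{L}^*V=-\dt(A^TV)-\dx(B^TV)+C^TV$ with the dual admissible boundary condition $\Xi^*V=0$ gives an analogous estimate $\|V\|_{L^2}\le C\|\mathcal{L}^*V\|_{L^2}$.

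Next, I would use the adjoint estimate to produce a weak solution. On the subspace $\mathcal{L}^*(\mathcal{D}^*)\subset L^2(\Omega)$, where $\mathcal{D}^*$ denotes test fields satisfying the dual boundary condition, the linear functional $\mathcal{L}^*V\mapsto\langle F,V\rangle$ is well defined and bounded thanks to the adjoint estimate. By Hahn--Banach and Riesz representation there is $U\in L^2(\Omega)$ with $\langle U,\mathcal{L}^*V\rangle=\langle F,V\rangle$ for all $V\in\mathcal{D}^*$, which is by definition a weak solution of \eqref{e:general}--\eqref{e:genrealboundary}. To promote the weak solution to a strong one, I would invoke Friedrichs' mollifier lemma: convolving $U$ with a standard mollifier $\rho_\eps$ and using the commutator estimate $\|[\mathcal{L},\rho_\eps\,\cdot\,]U\|_{L^2}\to 0$ (which uses only $A,B,C\in C^1$ together with $U\in L^2$), one shows $\rho_\eps*U$ is a smooth sequence whose images under $\mathcal{L}$ converge to $F$ in $L^2$; the noncharacteristic assumption on the boundary is what lets this mollification respect the boundary condition after standard localization and cut-off near $\partial\Omega$.

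Finally, to reach $H_s$ regularity, I would apply exactly the same three steps to the $s$-enlarged system. Under the hypothesis that the $s$-enlarged system is symmetric positive, with enlarged boundary condition \eqref{e:1enlargeboundary} (and its higher iterates) still admissible and noncharacteristic, the preceding argument delivers a strong $L^2$ solution $\underset{s}{U}$ of the $s$-enlarged equation with all components bounded by $\|(D-\theta)^sF\|_{L^2}$, which is finite because $F\in H_s$. By the construction of the enlarged system via the complete family $\{D_\sigma\}$, each component of $\underset{s}{U}$ equals the corresponding $s$-fold operator applied to $U$, so $D^\alpha U\in L^2$ for all $|\alpha|\le s$, i.e.\ $U\in H_s$. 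The main obstacle I would expect is the mollification step near $\partial\Omega$: keeping the mollified sequence inside the admissible boundary subspace, and controlling the commutators $[\mathcal{L},\rho_\eps\,\cdot\,]$ uniformly up to the boundary, is the technically delicate part; everything else is an essentially formal iteration of the energy identity.
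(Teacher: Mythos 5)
The paper does not actually prove this lemma: it is quoted from Friedrichs \cite{Fried} and Gu \cite{Gu}, and your sketch follows exactly the classical route those references take --- the energy identity plus admissibility for the $L^2$ bound, the adjoint estimate with Hahn--Banach/Riesz for a weak solution, Friedrichs mollification for weak $=$ strong, and the enlarged systems for the higher-order estimate --- so in structure you match the intended proof. Two caveats are worth recording. First, the weak-to-strong identification near $\partial\Omega$, which you rightly single out, is the genuinely hard part of \cite{Fried} (and is where the noncharacteristic hypothesis is really used, since one can then solve for the normal derivative and mollify only tangentially); a one-line appeal to a ``commutator estimate'' does not settle it. Second, your claim that each component of the solution of the $s$-enlarged system ``equals the corresponding $s$-fold operator applied to $U$ by construction'' is not automatic: solving the enlarged system produces some vector of unknowns, and identifying its entries with $D_{\sigma_1}\cdots D_{\sigma_l}U$ requires either uniqueness of strong solutions of the enlarged problem under the enlarged admissible boundary condition \eqref{e:1enlargeboundary}, or a tangential difference-quotient/approximation argument with uniform bounds from the enlarged a priori estimate; this identification is precisely where \cite{Gu} does its work. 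Note also that $H_s$ here is the norm built from the complete family of tangential operators $D_\sigma$, not the full Sobolev norm, so the correct conclusion of the iteration is $D_{\sigma_1}\cdots D_{\sigma_l}U\in L^2$ for all $l\le s$, which is exactly what the definition of $|||\cdot|||_s$ demands.
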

Here $H_s$ denotes the normed space with norm defined as
$$|||V|||_s=\sum_{0\leq l\leq s}\|D_{\sigma_1}\cdots D_{\sigma_l}V\|_{L^2}^2.$$
We also use $\|\cdot\|_s$ and $|\cdot|_s$ to denote the  norms of Sobolev space $H^s$ and $C^s$ . The proof of the Lemma \ref{l:exist} can be found in \cite{Gu}.
\bigskip

\section{Main theorem and its proof}\label{stable}
Let the given sufficiently smooth Riemaninian metric of a surface be
$$g=g_{ij}dx_idx_j,$$
and consider the isometric embedding problem in the neighbourhood of a closed curve $\Lambda=[0, 2\pi]\times\{0\}$ on the surface, i.e. to seek a surface $\vec{r}$ defined in $I_\delta=[0, 2\pi]\times[-\delta, \delta]$ such that
$$\vec{r}=(p, q, z)(x_1, x_2): I_\delta\rightarrow \R^3,\quad g=d\vec{r}^2.$$
\subsection{Necessary conditions}
Under the geodesic coordinate system based on the curve $\Lambda$, the metric can be reduced to be of the following form
\begin{equation}\label{e:metric}
g=B^2(x_1, x_2)dx_1^2+dx_2^2, \quad B(x_1, 0)=1, \quad \partial_{2}B(x_1, 0)=k_g,
\end{equation}
where $B(x_1, x_2)$ is a sufficiently smooth function and $k_g$ is the geodesic curvature of $\Lambda$. We also denote $(\partial_{x_1}, \partial_{x_2})=(\partial_1, \partial_2)$. Since $\Lambda$ is a closed curve, $B(x_1, x_2)$ is $2\pi$ periodic with respect to $x_1$. In geodesic coordinate, the Christoffel symbols are
\begin{equation}\label{e:christoffel}
\begin{split}
&\Gamma^1_{11}=\frac{\partial_1B}{B}, \quad \Gamma^1_{12}=\frac{\partial_2B}{B}, \quad \Gamma^1_{22}=0,\\
&\Gamma^2_{11}=-B\partial_2B, \quad \Gamma^2_{12}=\Gamma^2_{22}=0.
\end{split}
\end{equation}
As derived in \cite{Dong}, functions $p, q, z$ shall satisfy
\begin{align*}
&dp^2+dq^2=g-dz^2\\
=&(g_{11}-(\partial_1z)^2)dx_1^2+2(g_{12}-\partial_1z\partial_2z)dx_1dx_2+
(g_{22}-(\partial_2z)^2)dx_2^2.
\end{align*}
Then it is not hard  to derive the following Darboux equation satisfied by $z$:
\begin{equation}\label{e:darboux}
\begin{split}
0=&\det(\partial_{ij}z-\Gamma^k_{ij}\partial_kz)-K\det(g_{ij})(1-g^{ij}\partial_iz\partial_jz)\\
=&(\partial_{11}z-\Gamma^1_{11}\partial_1z+\Gamma^1_{12}\partial_2z)\partial_{22}z-(\partial_{12}z-\Gamma^1_{12}\partial_1z)^2\\
&-K\det(g_{ij})(1-g^{ij}\partial_iz\partial_jz)
\end{split}
\end{equation}
with $g^{ij}\partial_iz\partial_jz<1.$ Here matrix $(g^{ij})$ is the inverse of the metric matrix $(g_{ij})$ and we have used \eqref{e:christoffel}.  Finally, the isometric embedding problem is formulated to solve Darboux equation \eqref{e:darboux}. Similar to \cite{Dong} or \cite{li}, we can also derive the following theorem about the necessary conditions for our desired embeddings.
\begin{theorem}\label{t:necessary}
For any sufficiently smooth isometric embedding $\vec{r}=(p, q, z)(x_1, x_2)$ of $g$ on $I_\delta$ into $\R^3$ and $z=O(x_2^{\alpha+1})$ with any integer $\alpha\geq1,$ we have
\begin{align}
&k_g\partial_2^{2\alpha-1}K>0, \text{ on } \Lambda,\label{e:curvature}\\
&\int_0^{2\pi}k_gdx_1=2\pi, \label{e:compati-1}\\
&\int_0^{2\pi}\exp{\sqrt{-1}}\int_0^{x_1}k_gdsdx_1=0. \label{e:compati-2}
\end{align}
\end{theorem}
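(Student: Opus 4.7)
The plan is to derive \eqref{e:curvature} by a formal Taylor expansion of the Darboux equation \eqref{e:darboux} transverse to $\Lambda$, and to derive \eqref{e:compati-1}--\eqref{e:compati-2} from the fact that the restriction $\vec{r}(\cdot,0)$ is a closed planar curve whose signed curvature equals $k_g$.

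For \eqref{e:curvature}, since $z=O(x_2^{\alpha+1})$, I would write
$$z(x_1,x_2)=\frac{f(x_1)}{(\alpha+1)!}\,x_2^{\alpha+1}+O(x_2^{\alpha+2}),$$
where the leading profile $f$ is assumed not identically zero (so the vanishing order is exactly $\alpha+1$). Then $\partial_2 z$, $\partial_{22}z$, $\partial_{12}z$, $\partial_{11}z$, $\partial_1 z$ have $x_2$-orders $\alpha$, $\alpha-1$, $\alpha$, $\alpha+1$, $\alpha+1$ respectively. Using $\Gamma^1_{12}(x_1,0)=\partial_2 B(x_1,0)=k_g$ and $\Gamma^1_{11}(x_1,0)=0$ from \eqref{e:metric}--\eqref{e:christoffel}, I track the leading $x_2$-powers in \eqref{e:darboux}: the products $\partial_{11}z\cdot\partial_{22}z$ and $(\partial_{12}z-\Gamma^1_{12}\partial_1 z)^2$ both begin at $O(x_2^{2\alpha})$, while the Christoffel-coupled cross term contributes the unique lower-order piece
$$\Gamma^1_{12}\,\partial_2 z\cdot\partial_{22}z=\frac{k_g(x_1)\,f(x_1)^2}{\alpha!(\alpha-1)!}\,x_2^{2\alpha-1}+O(x_2^{2\alpha}).$$
The right-hand side $K B^2(1-g^{ij}\partial_i z\partial_j z)$ agrees with $K(x_1,x_2)$ modulo $O(x_2\,K)+O(x_2^{2\alpha}\,K)$, so matching the $x_2^{2\alpha-1}$ coefficient forces
$$K(x_1,x_2)=\frac{k_g(x_1)\,f(x_1)^2}{\alpha!(\alpha-1)!}\,x_2^{2\alpha-1}+O(x_2^{2\alpha}),$$
whence $\partial_2^{2\alpha-1}K\big|_\Lambda=\frac{(2\alpha-1)!}{\alpha!(\alpha-1)!}\,k_g f^2$, and therefore $k_g\,\partial_2^{2\alpha-1}K=\frac{(2\alpha-1)!}{\alpha!(\alpha-1)!}(k_g f)^2\ge 0$. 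Strict positivity follows under the standing nondegeneracy assumptions $f(x_1)\ne 0$ and $k_g(x_1)\ne 0$ on $\Lambda$.

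For \eqref{e:compati-1}--\eqref{e:compati-2}, since $\alpha\ge 1$ one has $z|_\Lambda=\partial_2 z|_\Lambda=0$, so $\vec{r}(x_1,0)=(p(x_1,0),q(x_1,0),0)$ is a $2\pi$-periodic planar curve. The induced metric $B(x_1,0)^2 dx_1^2=dx_1^2$ shows that $x_1$ is arc length, so $T(x_1):=(p_{x_1},q_{x_1})(x_1,0)$ is a unit planar tangent. Computing the planar curvature of $\vec{r}(\cdot,0)$ (equivalently, using the geodesic-curvature formula for $\{x_2=0\}$ on the abstract surface together with $\Gamma^2_{11}|_\Lambda=-k_g$) yields, after a rigid motion in the $(p,q)$-plane,
$$p_{x_1}(x_1,0)+\sqrt{-1}\,q_{x_1}(x_1,0)=\exp\!\Bigl(\sqrt{-1}\int_0^{x_1}k_g(s)\,ds\Bigr).$$
Periodicity $T(2\pi)=T(0)$ forces $\int_0^{2\pi}k_g\,dx_1\in 2\pi\mathbb{Z}$, and the Hopf Umlaufsatz identifies this integer as $+1$ for a simple closed embedded curve with the natural orientation, giving \eqref{e:compati-1}. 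Integrating the tangent identity over $[0,2\pi]$ and invoking the closure $\vec{r}(2\pi,0)=\vec{r}(0,0)$ then yields exactly \eqref{e:compati-2}.

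The main obstacle is the first step. Among the terms of \eqref{e:darboux} one must recognize that the lowest $x_2$-order is carried not by the Hessian-determinant part (whose two $z$-derivative products are each only $O(x_2^{2\alpha})$) but by the Christoffel-coupled cross term $\Gamma^1_{12}\,\partial_2 z\cdot\partial_{22}z$. It is precisely this term that drops the $x_2$-order from $2\alpha$ to $2\alpha-1$, pins the vanishing order of $K$ to the odd integer $2\alpha-1$, and couples the sign of $\partial_2^{2\alpha-1}K$ to that of $k_g$.
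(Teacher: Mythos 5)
Your proposal is correct, and for \eqref{e:curvature} it is essentially the paper's own argument: the paper takes $2\alpha-1$ derivatives of \eqref{e:darboux} in $x_2$ and sets $x_2=0$, obtaining $-\Gamma^2_{11}(\partial_2^{\alpha+1}z)^2=\partial_2^{2\alpha-1}K\det(g_{ij})$ on $\Lambda$ (with the positive constant $\tfrac{(2\alpha-1)!}{\alpha!(\alpha-1)!}$ suppressed, which you compute correctly), and then uses $\Gamma^2_{11}|_\Lambda=-k_g$ from \eqref{e:metric}--\eqref{e:christoffel}; your extraction of the $x_2^{2\alpha-1}$ Taylor coefficient is the same computation in different notation. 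Two points of comparison. First, you isolate the low-order term as $\Gamma^1_{12}\partial_2z\,\partial_{22}z$, following the printed form of \eqref{e:darboux}; the covariant Hessian entry is really $\partial_{11}z-\Gamma^1_{11}\partial_1z-\Gamma^2_{11}\partial_2z$ (compare \eqref{e:initial-1}), so the relevant coefficient is $-\Gamma^2_{11}=B^2\Gamma^1_{12}$; since $B=1$ on $\Lambda$ this changes nothing in your leading coefficient or sign conclusion. Second, and more substantively, your identity unconditionally gives only $k_g\partial_2^{2\alpha-1}K\ge0$, and you obtain strictness from ``standing nondegeneracy assumptions'' $f\neq0$ and $k_g\neq0$, which are not hypotheses of the theorem; the paper reads the same identity in the opposite direction: its standing hypothesis is on the metric ($K$ changes sign stably, i.e.\ $\partial_2^{2\alpha-1}K\neq0$ on $\Lambda$, cf.\ $K=x_2^{2\alpha-1}K_0$ with $K_0\neq0$ in Step 1), from which it deduces $\partial_2^{\alpha+1}z\neq0$ and $\Gamma^2_{11}\neq0$ (hence $k_g\neq0$) and the strict sign. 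Your formula $\partial_2^{2\alpha-1}K|_\Lambda=\tfrac{(2\alpha-1)!}{\alpha!(\alpha-1)!}k_gf^2$ shows the two nondegeneracies are equivalent, so this is a one-line repair, but as written your strict inequality rests on assumptions about the embedding rather than on the paper's assumption about $K$. For \eqref{e:compati-1}--\eqref{e:compati-2} the paper simply invokes Dong's procedure; your reconstruction (the trace on $\Lambda$ is a simple closed unit-speed plane curve of signed curvature $k_g$, the Umlaufsatz fixes the rotation index, and closure of the curve yields the vanishing integral) is precisely that argument, so there you are supplying the details the paper delegates to the reference.
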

\begin{proof}
Since $\vec{r}$ is an isometric embedding of $I_\delta$, and $z=O(x_2^{\alpha+1})$, we directly take $2\alpha-1$ times partial derivatives of  \eqref{e:darboux} with respect to $x_2$ and then set $x_2=0$, finally obtain when $x_2=0,$
\begin{equation*}
-\Gamma^2_{11}(\partial_2^{\alpha+1}z)^2=\partial_2^{2\alpha-1}K\det{g_{ij}}\neq0,
\end{equation*}
which implies
 $$\Gamma^2_{11}(x_1, 0)\neq0, \quad \partial_2^{\alpha+1}z\neq0,\quad(\Gamma^2_{11}\partial_2^{2\alpha-1}K)(x_1, 0)<0.$$
Hence, taking value of \eqref{e:metric} and \eqref{e:christoffel}, one can easily derive \eqref{e:curvature}. Using the fact that $\Lambda$ is a closed curve and following the same procedure as \cite{Dong}, we can also gain \eqref{e:compati-1} and \eqref{e:compati-2}.
\end{proof}

\subsection{Statement of main theorem}
We first give a definition.
\begin{definition}
A surface is called $\alpha$-surface if it is  $2\pi$ periodic with respect to $x_1$ and satisfies \eqref{e:curvature}-\eqref{e:compati-2}.
\end{definition}
Then we state our semi-global isometric embedding theorem.
\begin{theorem}[\bf{Main Theorem}]\label{t:anyorder}
Given an $\alpha$-surface prescribed with sufficiently smooth ($C^{s_*}, s_*\geq 2\alpha+31\}$)    metric $g$, we can find a small positive constant $\delta$ and a sufficiently smooth ($C^{s}, 4\leq s\leq \tfrac{4}{7} (s^*-2\alpha)-4$) isometric embedding
$$\vec{r}=(p, q, z)(x_1, x_2): I_\delta\rightarrow \R^3,$$
such that $g=d\vec{r}^2.$
\end{theorem}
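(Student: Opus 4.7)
The plan is a Nash--Moser iteration whose linearized step is handled by Lemma~\ref{l:exist}. The first task is to construct an approximate solution $z_*$ for the Darboux equation \eqref{e:darboux}. Motivated by Theorem~\ref{t:necessary}, I would set
\begin{equation*}
z_*(x_1,x_2)=\sum_{j=\alpha+1}^{N}\phi_j(x_1)\,x_2^{j},
\end{equation*}
and determine the $\phi_j$ recursively by requiring that the Darboux residual $\mathcal{F}(z_*)$ vanish to order $N-\alpha$ at $x_2=0$. The leading coefficient $\phi_{\alpha+1}$ is fixed (up to sign) by the relation $-\Gamma^2_{11}(\partial_2^{\alpha+1}z)^2=\partial_2^{2\alpha-1}K\det(g_{ij})$ obtained in the proof of Theorem~\ref{t:necessary}: condition \eqref{e:curvature} makes this compatible, and the periodicity in $x_1$ together with the two integral conditions \eqref{e:compati-1}--\eqref{e:compati-2} provides exactly the solvability needed for the periodic ODEs that determine $\phi_{\alpha+1}$ and $\phi_{\alpha+2}$, as in \cite{Dong,li}. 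The higher $\phi_j$ are solved by standard transport-type ODEs in $x_1$.

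Writing $z=z_*+w$ and linearizing \eqref{e:darboux} yields an equation of the shape
\begin{equation*}
a_{11}\p_{11}w+2a_{12}\p_{12}w+a_{22}\p_{22}w+b_1\p_1w+b_2\p_2w+cw=f,
\end{equation*}
whose discriminant $a_{12}^2-a_{11}a_{22}$ carries the sign of $-K$, so the operator is mixed elliptic--hyperbolic across $\Lambda$. Introducing $U=(\p_1w,\p_2w,w)^T$ together with an algebraic closure relation recasts the problem as a first-order system of the form \eqref{e:general}. The key linear step is to construct a multiplier $\Pi(x_1,x_2)$ such that $\Pi A$ and $\Pi B$ are symmetric and
\begin{equation*}
\Theta=\Pi C+(\Pi C)^T-\p_1(\Pi A)-\p_2(\Pi B)
\end{equation*}
is positive definite on $I_\delta$. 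Because $a_{11}\sim x_2^{\alpha-1}$ and $a_{22}$ carries the factor that flips the type, $\Pi$ must be engineered to absorb these degeneracies while preserving $2\pi$-periodicity in $x_1$ and leaving an admissible (maximal non-negative) boundary condition on $x_2=\pm\delta$. With $\delta$ sufficiently small and $\Pi$ chosen in the spirit of \cite{Lin,han}, Lemma~\ref{l:exist} then delivers a strong solution $w$ of each linearized problem together with a tame estimate of the type
\begin{equation*}
|||w|||_{s}\le C\bigl(|||f|||_{s}+|||f|||_{0}\,\|z_*\|_{s+\mu}\bigr),
\end{equation*}
for some fixed loss $\mu$ that can be kept independent of $\alpha$.

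The nonlinear problem is then closed by a Nash--Moser scheme $z_{n+1}=z_n+S_{\theta_n}w_n$, with $w_n$ solving the linearization at $z_n$ with right-hand side $-\mathcal{F}(z_n)$ and $S_\theta$ Friedrichs smoothers; the quadratic error produced by smoothing and linearization is controlled by the tame estimate, and choosing $\theta_n=\theta_0\kappa^n$ with $\kappa\in(1,2)$ produces $C^s$ convergence. The thresholds $s_*\ge 2\alpha+31$ and $s\le\tfrac{4}{7}(s_*-2\alpha)-4$ bookkeep the roughly $2\alpha$ derivatives spent on the construction of $z_*$ (since $\partial_2^{2\alpha-1}K$ and higher residual derivatives must be computed), the fixed loss $\mu$ per linear solve, and the Nash--Moser exponent governing $\theta_n$. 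I expect the main obstacle to be the explicit construction of $\Pi$: it must symmetrize $A,B$, yield a positive definite $\Theta$ \emph{uniformly in $\alpha$} despite the order-$(2\alpha-1)$ degeneracy at $\Lambda$, and retain positivity after every $s$-enlargement invoked by Lemma~\ref{l:exist}. The delicate interplay of the vanishing order $\alpha$, the opposite signs of $K$ on the two sides of $\Lambda$, and the closed-curve topology all concentrate into this one algebraic construction.
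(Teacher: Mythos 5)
Your overall scaffold (approximate solution, linearization, symmetric positive systems via Lemma~\ref{l:exist}, Nash--Moser) matches the paper's, but the heart of the matter is missing: you never actually produce the symmetric positive structure, you only postulate a multiplier $\Pi$ ``in the spirit of Lin/Han'' and yourself flag its construction as the main obstacle. The paper does not symmetrize the linearization at a general approximate solution by a clever multiplier; it builds the positivity into the problem beforehand. Concretely: the ansatz $z=x_2^{\alpha+1}(a(x_1)+w)$ with the single leading coefficient $a$ given \emph{algebraically} by \eqref{e:a} (no recursion to high order $N$, no ODEs in $x_1$), followed by the scaling $x_2=\eps^2y_2$, $w=\eps u$, makes the residual $\mathcal{F}(0,\eps)=\eps F_0$ small and turns the linearized coefficients into $O(\eps)$ perturbations of an explicit model operator; after dividing by $\Gamma^2_{11}a(\alpha+1)$ and changing the $x_1$ variable, the first-order reduction is done with the specific vector $U=e^{\gamma t}(\dt\phi,\phi/t,\eps\dx\phi)$ (the $\phi/t$ component is essential because of the $1/t$ zero-order term), and the choices $\beta=2\alpha(\alpha+1)$, $0<\gamma<1/4$ make $\Theta$ and every $s$-enlarged $\Theta$ positive for $\eps$ small and $|u|_4\le1$, yielding the tame estimate \eqref{e:estimate} with constants independent of $\alpha$. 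In your formulation, with $a_{11}\sim x_2^{\alpha-1}$ degeneracy left in the coefficients and $U=(\p_1w,\p_2w,w)$, there is no reason a periodic, admissible $\Pi$ with positive enlarged systems exists uniformly in $\alpha$; asserting it is precisely the step that needs proof, so the linear theory and hence the iteration are unsupported as written.

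A second genuine gap is your use of the compatibility conditions and the endgame. The coefficients of the formal solution are fixed pointwise (quadratically for the leading one, linearly for the rest); \eqref{e:compati-1}--\eqref{e:compati-2} play no role in ``solvability of periodic ODEs for $\phi_{\alpha+1},\phi_{\alpha+2}$.'' Their actual function is in the step you omit entirely: after $z$ is obtained, one must integrate $dp^2+dq^2=g-dz^2$ (a flat metric, using $g^{ij}\partial_iz\partial_jz<1$) and the two integral conditions are exactly what make $p,q$ close up, i.e.\ $2\pi$-periodic in $x_1$, as in \cite{Dong}. Without constructing $p,q$ your argument only solves the Darboux equation and does not deliver the embedding claimed in Theorem~\ref{t:anyorder}; with the misplaced use of \eqref{e:compati-1}--\eqref{e:compati-2} in Step~1, the place where the closed-curve hypotheses genuinely enter disappears from the proof.
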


\subsection{Proof of main theorem}
We divide the proof into four steps.

\emph{Step 1. Initial approximate solution. } Without loss of generality, we can assume that  the Gaussian curvature is
$$K(x_1, x_2)=x_2^{2\alpha-1}K_0(x_1, x_2), \quad K_0(x_1, x_2)\neq0, \text{ on } \Lambda.$$
Then \begin{equation}\label{e:k0}
k_gK_0(x_1, 0)>0.
\end{equation}
Let our desired solution to the Darboux equation be of the following form
$$z(x_1, x_2)=x_2^{\alpha+1}(a(x_1)+w(x_1, x_2))$$
and  then an easy calculation yields
\begin{equation}\label{e:zderivatives}
\begin{split}
&\partial_1z=x_2^{\alpha+1}(a'+\partial_1w), \quad \partial_2z=x_2^\alpha(\alpha+1)(a+w)+x_2^{\alpha+1}\partial_2w,\\
&\partial_{11}z=x_2^{\alpha+1}(a''+\partial_{11}w), ~~~\partial_{12}z=x_2^\alpha(\alpha+1)(a'+\partial_1w)+x_2^{\alpha+1}\partial_{12}w,\\
&\partial_{22}z=(\alpha+1)\alpha x_2^{\alpha-1}(a+w)+2(\alpha+1)x_2^\alpha\partial_2w+x_2^{\alpha+1}\partial_{22}w.
\end{split}
\end{equation}
Plugging \eqref{e:zderivatives} into \eqref{e:darboux} and then dividing the resulting equation by $x_2^{2\alpha}$, we will get
\begin{equation}\label{e:initial-1}
\begin{split}   
&[x_2\partial_{11}w+x_2a''-\Gamma_{11}^1x_2(a'+\partial_1w)-\Gamma^2_{11}((\alpha+1)(a+w)+x_2\partial_2w)] \\
&\cdot[x_2\partial_{22}w+2(\alpha+1)\partial_2w+(\alpha+1)\alpha(a+w)x_2^{-1}]\\
&-[x_2\partial_{12}w+(\alpha+1)(a'+\partial_1w)-\Gamma^1_{12}x_2(a'+\partial_1w)]^2 \\
&-x_2^{-1}K_0\det(g_{ij})\big[1-g^{11}(x_2^{\alpha+1}(a'+\partial_1w))^2\\
&\qquad-g^{22}(x_2^\alpha(\alpha+1)(a+w)+x_2^{\alpha+1}\partial_2w)^2\\
&\qquad-2g^{12}(x_2^\alpha(\alpha+1)(a+w)+x_2^{\alpha+1}\partial_2w)(x_2^{\alpha+1}(a'+\partial_1w))\big]=0.
\end{split}
\end{equation}
To construct $a(x_1),$ let $w=0$ in \eqref{e:initial-1} and then we get
\begin{equation}\label{e:initial-2}
\begin{split}
&[-\Gamma_{11}^1x_2a'-\Gamma^2_{11}(\alpha+1)a]
\cdot[(\alpha+1)\alpha ax_2^{-1}]-[(\alpha+1)a'-\Gamma^1_{12}x_2a']^2-\\
&-x_2^{-1}K_0\det(g_{ij})\big[1-g^{11}(x_2^{\alpha+1}a')^2
g^{22}(x_2^\alpha(\alpha+1)a)^2-2g^{12}ax_2^{2\alpha+1}a'\big]=0
\end{split}
\end{equation}
Furthermore, taking $x_2=0$  in \eqref{e:initial-2} leads to that
$$-\Gamma^2_{11}(\alpha+1)^2\alpha a^2x_2^{-1}-x_2^{-1}K_0\det(g_{ij})=0$$
holds for $x_2=0.$ Thus we take
\begin{equation}\label{e:a}
a(x_1)=\sqrt{\frac{-K_0\det(g_{ij})}{\Gamma^2_{11}\alpha(\alpha+1)^2}}(x_1, 0)
=\sqrt{\frac{K_0\det(g_{ij})}{k_g\alpha(\alpha+1)^2}}(x_1, 0).
\end{equation}
It is easy to see that $a(x_1)$ is well defined from \eqref{e:k0}. After choosing $a(x_1),$  we take
$$x_1=y_1, x_2=\eps^2 y_2, w=\eps u.$$
Then we can rewrite the Darboux equation as follows.
\begin{equation}\label{e:initial-3}
\begin{split}   
\mathcal{F}(u, \eps)=
&[\eps^3y_2\partial_{11}u+\eps^2y_2a''-\Gamma_{11}^1\eps^2y_2(a'+\eps\partial_1u)\\
&\quad -\Gamma^2_{11}((\alpha+1)(a+\eps u)+\eps y_2\partial_2u)] \\
&\cdot[y_2\partial_{22}w+2(\alpha+1)\partial_2u+(\alpha+1)\alpha(a+\eps u)(\eps y_2)^{-1}\\
&-\eps[\eps y_2\partial_{12}u+(\alpha+1)(a'+\eps\partial_1u)-\Gamma^1_{12}\eps^2y_2(a'+\eps\partial_1u)]^2 \\
&-{\eps y_2}^{-1}K_0\det(g_{ij})\big[1-g^{11}((\eps^2y_2)^{\alpha+1}(a'+\eps\partial_1u))^2\\
&\quad-g^{22}((\eps^2y_2)^\alpha(\alpha+1)(a+\eps u)+(\eps^2y_2)^{\alpha+1}\eps^{-1}\partial_2u)^2\\
&\quad-2g^{12}(\eps^2y_2)^{2\alpha+1}((\alpha+1)(a+\eps u)+\eps y_2\partial_2u)(a'+\eps\partial_1u)\big]=0.
\end{split}
\end{equation}
Here we still use $\partial_i$ to denote taking derivatives with respect to $y_i.$
Utilizing the definition of $a(x_1),$ it is easy to find
 $$\mathcal{F}(0, \eps)=\eps F_0(y_1, \eps^2y_2),$$
where $F_0(x_1,x_2)$ is a smooth function of $x_1, x_2.$ Note that $a(x_1)\in C^{s_*-2\alpha-1} $ and $K_0(x_1, x_2)\in C^{s_*-2\alpha-1} $ due to  $g\in C^{s_*}$ and $K\in C^{s_*-2}.$ Hence
$F_0(y_1, \eps^2y_2)\in C^{s_*-2\alpha-3}$ since $a''(y_1)$ in  $F_0(y_1, \eps^2y_2)$ is of $C^{s_*-2\alpha-3}.$

\emph{Step 2. Linearisation.} Linearising of $\mathcal{F}$ at $u$ contributes to
\begin{equation*}
\mathcal{F}'(u)\phi=\underset{i, j=1,2}{\sum}a_{ij}\partial_{ij}\phi+\sum_{i=1,2}a_i\partial_i\phi+a_0\phi,
\end{equation*}
in which
\begin{align*}
a_{11}=&\eps^3y_2[y_2\partial_{22}u+2(\alpha+1)\partial_2u+(\alpha+1)\alpha(a+\eps u)(\eps y_2)^{-1}]\\
\doteq&\eps^2[a(\alpha+1)\alpha+\eps \tilde{b}_{11}],\\
a_{12}=&-\eps y_2[\eps^2y_2\partial_{12}u+\Gamma^1_{11}\eps^3y_2(a'+\eps\partial_1u)+\eps(\alpha+1)a'\\
&+\eps\Gamma^2_{11}((\alpha+1)(a+\eps u)+\eps y_2\partial_2u)+\eps^2(\alpha+1)\partial_1u]\\
\doteq&\eps^2y_2\tilde{b}_{12},\\
a_{22}=&y_2[\eps^3y_2\partial_{11}u+\Gamma^1_{11}\eps^2y_2(a'+\eps\partial_1u)+\Gamma^2_{11}(\alpha+1)(a+\eps u)
+\Gamma^2_{11}\eps y_2\partial_2u]\\
\doteq&y_2[\Gamma^2_{11}(\alpha+1)a+\eps \tilde{b}_{22}],
\end{align*}
and
\begin{align*}
a_1=&\eps^3y_2\Gamma_{11}^1\frac{a_{11}}{\eps^3y_2}
+[\eps^2(\alpha+1)+\Gamma^1_{11}\eps^4y_2]\frac{a_{12}}{\eps y_2}+h.o.t.,\\
\doteq&\eps^2\tilde{b}_1,\\
a_2=&\eps\Gamma^2_{11}y_2\frac{a_{11}}{\eps^3y_2}+2(\alpha+1)\frac{a_{22}}{y_2}
+h.o.t,\\
\doteq&\Gamma_{11}^2(\alpha+1)(3\alpha+2)a+\eps \tilde{b}_2,\\
a_0=&\eps(\alpha+1)\Gamma^2_{11}\frac{a_{11}}{\eps^3y_2}+\frac{(\alpha+1)\alpha a}{\eps y_2}\frac{a_{22}}{y_2}\\
\doteq&\frac{1}{y_2}[2\Gamma^2_{11}a(\alpha+1)^2\alpha+\eps \tilde{b}_0] ,
\end{align*}
where $h.o.t.$ stands for the higher order terms with respect to $\eps$.  Besides,  we
divide $\mathcal{F}(u, \eps)\phi$ by $\Gamma^2_{11}a(\alpha+1)$ and take a new change of variable
\begin{equation*}
t=y_2, x=\frac{2\pi}{\int^{2\pi}_0\sqrt{-(\alpha+1)k_ga}dx_1}
\int_0^{y_1}\sqrt{-(\alpha+1)k_ga}dx_1,
\end{equation*}
where we have assumed $\Gamma^2_{11}(x_1, 0)=-k_g>0$ (otherwise one can divide $\mathcal{F}'(u)\phi$ by $-\Gamma^2_{11}a(\alpha+1),$ and make similar transformation).
We finally get the following linear differential equation
\begin{equation*}
\begin{split}
\mathcal{L}(u)\phi=&\eps^2(\alpha+\eps b_{11})\partial^2_{xx}\phi+\eps^2tb_{12}\partial^2_{xt}\phi+t(1+\eps b_{22})\partial^2_{tt}\phi\\
&+\eps^2b_1\dx\phi+(3\alpha+2+\eps b_2)\dt\phi+[2(\alpha^2+\alpha)+\eps b_0]\frac{\phi}{t},
\end{split}
\end{equation*}
where $b_{ij}, b_k, (i, j=1,2. k=0,1,2)$ are all bounded smooth linear functions with respect to $u,  t\partial_tu, t^2\partial^2_{tt}u, \partial_xu,\partial^2_{xx}u, t\partial_{xt}^2u$, similar to $\tilde{b}_{ij}, \tilde{b}_k.$

\emph{Step 3. Priori estimates.}  For any given smooth function $f(x, t),$ we  study the boundary value problem
\begin{equation*}
\begin{split}
&\mathcal{L}(u)\phi=f(x, t), \quad (x, t)\in G=[0, 2\pi]\times[-2,2],\\
&\phi(x, 2)=0, x\in[0, 2\pi]; \phi(0, t)=\phi(2\pi, t),  |t|\leq2,
\end{split}
\end{equation*}
and derive the estimates of its solutions.
Let
$$U=(u_1, u_2, u_3)^T=e^{\gamma t}(\dt\phi, \frac{\phi}{t}, \eps\dx\phi),$$
and $F=(e^{\gamma t}f, 0, 0)^T.$
Then the linear equation with boundary condition can be transformed to be the following boundary value problem
\begin{equation}\label{e:linear}
\begin{split}
&Lu=A\dt U+B\dx U+CU=F,\\
&u_2(x, 2)=u_3(x, 2)=0, x\in[0, 2\pi], \\
&U(0, t)=U(2\pi, t), |t|\leq 2,
\end{split}
\end{equation}
where
\begin{equation*}
\begin{split}
&A=\left(               
  \begin{array}{ccc}   
   t(1+\eps b_{22})~&~0~&~0\\  
   0~&~\beta t~&~0\\
   0~&~0~&~-(\alpha+\eps b_{11})  
  \end{array}
\right),\\
&B=\left(               
  \begin{array}{ccc}   
   \eps^2b_{12}t~&~0~&~\eps(\alpha+\eps b_{11})\\  
   0~&~0~&~0\\
   \eps(\alpha+\eps b_{11})~&~0~&~0 
  \end{array}
\right),\\
&C=\left(               
  \begin{array}{ccc}   
   3\alpha+2-\gamma t+\eps b_2-\eps\gamma t b_{22}~&~
   2(\alpha^2+\alpha)+\eps b_0~&~\eps b_1\\  
   -\beta~&~\beta(1-\gamma t)~&~0\\
   0~&~0~&~\gamma(\alpha+\eps b_{11}) 
  \end{array}
\right)
\end{split}
\end{equation*}
with positive constants  $\beta, \gamma$ to be determined.
In our transformation, we have used the following two simple equations:
\begin{equation*}
\begin{split}
\beta t\dt(e^{\gamma t}\frac{\phi}{t})&=\beta t(\gamma e^{\gamma t}\frac{\phi}{t}+e^{\gamma t}\frac{\dt\phi}{t}-e^{\gamma t}\frac{\phi}{t^2})\\
&=\beta e^{\gamma t}\dt\phi+\beta(\gamma t-1)e^{\gamma t}\frac{\phi}{t},\\
\dt(e^{\gamma t}\eps\dx\phi)&=\gamma(e^{\gamma t}\dx\phi)+\eps\dx(e^{\gamma t}\dt\phi).
\end{split}
\end{equation*}
We shall show any s-enlarged system of \eqref{e:linear} is symmetric positive so that we can use Lemma \ref{l:exist} to show the existence and derive estimates. It is easy to see that $Lu=F$ is a symmetric system. To prove it positive, let $\eps=0,$ then \eqref{e:linear} becomes
$$L^0U=A^0\dt U+C^0U=F,$$
where
\begin{equation*}
\begin{split}
&A^0=\left(               
  \begin{array}{ccc}   
   t~&~0~&~0\\  
   0~&~\beta t~&~0\\
   0~&~0~&~-\alpha  
  \end{array}
\right),\\
&C^0=\left(               
  \begin{array}{ccc}   
   3\alpha+2-\gamma t~&~2(\alpha^2+\alpha)~&~0\\  
   -\beta~&~\beta(1-\gamma t)~&~0\\
   0~&~0~&~\gamma\alpha 
  \end{array}
\right).
\end{split}
\end{equation*}
Furthermore, simple calculation leads to
$$
\underset{0}{\Theta}^0=C^0+C^{0T}-\dt A^0
=\left(               
  \begin{array}{ccc}   
   2(3\alpha+2-\gamma t)-1~&~
   2(\alpha^2+\alpha)-\beta~&~0\\  
   2(\alpha^2+\alpha)-\beta~&~2\beta(1-\gamma t)-\beta~&~0\\
   0~&~0~&~2\gamma\alpha 
  \end{array}
\right).
$$
Hence we can take
$$\beta=2(\alpha^2+\alpha), \quad 0<\gamma<\frac{1}{4}(\text{e.g.} \gamma=\frac{1}{8})$$
to make $\underset{0}{\Theta}^0$ positive. Assume that $|u|_4\leq1,$  we can choose $\eps_1$ small enough so that when $0<\eps\leq \eps_1,$ $$\underset{0}{\Theta}=C+C^T-\dt A-\dx B$$
is positive. On the other hand, similar to \cite{Dong}, we introduce the following differential operators
$$\mathfrak{D}=\{D_0=I, D_1=\dx, D_2=\xi_2(t)\dt, D_3=\xi_3(t)(t-2)\dt\},$$
which forms a complete system of tangential differential operators on $G$ if $\xi_2+\xi_3=1$ on $G$ and $\xi_2=1$ when $t<1/2,$ $\xi_3=1$ when $t>1.$ It is not hard to get that $\det A(x, 2)\neq0$ when $x\in[0, 2\pi]$ and $0<\eps\leq\eps_2$ with $\eps_2$ small enough, thus $t=2$ is not characteristic boundary for the system \eqref{e:linear}. Moreover, the $s$-th enlarged system of \eqref{e:linear} is
\begin{equation}\label{senlarge2}
\begin{split}
LD_{\sigma_1}\cdots D_{\sigma_s}&=\sum_\mu P_{\sigma_a}^\mu D_{\sigma_1}\cdots D_{\sigma_{a-1}}D_\mu D_{\sigma_{a+1}}\cdots D_{\sigma_s}+\prod^s_{i=1}(D_{\sigma_i}-Q_{\sigma_i})L\\
&+\sum_{r\leq s-1}D^{q_1}Q_{\sigma_1}\cdots D^{q_{l-1}}Q_{\sigma_{l-1}}D^{q_l}P_{\sigma_l}^\mu D_\mu  D_{\sigma_{l+1}}\cdots D_{\sigma_r},
\end{split}
\end{equation}
with integers satisfying $q_1+\cdots+q_l\geq1,$ $q_1+\cdots+q_l+1+r-l\geq s.$ Here $P^\mu_\sigma, Q$ are smooth matrices. The positivity of \eqref{senlarge2} is determined by
$$\underset{s}{\Theta}=\text{diag}(\underset{0}{\Theta}^0,\cdots, \underset{0}{\Theta}^0)+
\text{diag}(m_1\dt A, \cdots, m_{r_0}\dt A)$$
with $r_0=3\cdot 4^s$ and integers $m_j\in[0, s], 1\leq j\leq r_0.$ Hence, $\underset{s}{\Theta}$ is positive definite provided that $\eps$ is small enough and $|u|_4\leq1$.
Upon obtaining the positivity of any $s$-th enlarged system, Lemma \ref{l:exist} guarantees existence of \eqref{e:linear}. Taking value of the relation  between $H_s$ and $H^s$ and Sobolev embedding theorem, following\cite{Dong}, one can obtain the priori estimate for the linearised boundary value problem
\begin{equation}\label{e:estimate}
\|U\|_s\leq C_s(\|F\|_s+\zeta(s)\|u\|_{s+3}\|F\|_2),
\end{equation}
with $\zeta(s)=1$ when $s\geq3,$ and $\zeta(s)=0$ when $0\leq s\leq2.$

\emph{Step 4. Iteration and seeking $\vec{r}$.} After comparing \eqref{e:estimate} with the priori estimates (34) in \cite{Dong}, one find that both priori estimates are same. Hence  we can use Nash-Moser iteration scheme in \cite{Dong} to construct a sequence of approximate solutions.  After taking $0<\eps\leq\eps_3$ with $\eps_3$ small, we are able to show its convergence to a function $U\in H^{\tilde{s}}$ such that
$$\|U\|_{\tilde{s}}\leq C\eps\|F_0\|_{\tilde{s}_*} $$
with
$$ 6\leq\tilde{s}<\frac{4}{7}\tilde{s}_*, \quad 28\leq\tilde{s}_*=s_*-2\alpha-3. $$
Hence our desired solution $z\in C^s$ with $s=\tilde{s}-2$ and then
$$4\leq s\leq\frac{4}{7}(s_*-2\alpha)-4,\quad s_*\geq2\alpha+31.$$
Following the same way as Section 4 of \cite{Dong}, we are able to seek $p, q$, where the two compatibility conditions \eqref{e:compati-1}-\eqref{e:compati-2} are used to make $p, q, z$ periodic in $x_1$. Therefore we complete the proof of Theorem \ref{t:anyorder} after taking $\delta=2\eps_0$ with $\eps_0=\min\{\eps_i, i=1, 2, 3\}.$

%
\bigskip



\bibliographystyle{amsplain}


\end{document}